\newtheorem{thm}{Theorem}[section]
\theoremstyle{remark}
\newtheorem{remark}[thm]{Remark}
\newtheorem{example}[thm]{Example}
\newtheorem{cor}[thm]{Corollary}
\newtheorem{pro}[thm]{Proposition}
\numberwithin{equation}{section}
\def\k{k}
\def\n{n}
\def\tr{\mathop{\rm tr}}
\def\l{\lambda}
\begin{document}

\title[Eigenvalue inequalities]
{Inequalities for selected eigenvalues \\of the product of matrices}

\author[B.-Y. Xi]{Bo-Yan Xi}
\address[B.-Y. Xi]{College of Mathematics, Inner Mongolia University for Nationalities, Tongliao City, Inner Mongolia Autonomous Region, 028043, China}
\email{baoyintu78@imun.edu.cn}

\author[F. Zhang]{Fuzhen Zhang}
\address[F.  Zhang]{Department of Mathematics, Nova Southeastern University, 33301 College Ave., Fort Lauderdale, FL 33314, USA}
\email{zhang@nova.edu}

\begin{abstract}
The product of a Hermitian matrix and a positive semidefinite matrix has only real eigenvalues. We present  bounds for sums of eigenvalues of such a product. 
\end{abstract}

\subjclass[2010]{Primary 15A42; Secondary 47A75}

\keywords{Eigenvalue, Hermitian matrix,  inequality, positive semidefinite matrix}

\thanks{The first author  was  supported in part by the National Natural Science Foundation  of China Grant
No.~11361038.  The second author was   supported  in part by NSU Research Scholar grant.}

\maketitle

\section{Introduction}

Let $A$ be an $n\times n$ Hermitian matrix with eigenvalues
 $\lambda_1(A)\ge\lambda_2(A)\ge \cdots\ge\lambda_n(A)$. If
 some, say $k$, of the eigenvalues of $A$ are selected, they may be indexed by a
  sequence $1\leq i_1<i_2< \dots <i_k\leq n$.  Hence,
  $\lambda_{i_1}(A)\ge\lambda_{i_2}(A)\ge \cdots\ge\lambda_{i_k}(A)$.

  A classical result of Wielandt \cite{Wielandt-PAMS-1955} states that if $A$ and $B$
  are $n\times n$ Hermitian matrices and
 $1\le i_1<i_2<\cdots<i_k\le n$, then
  \begin{equation}\label{Eq:v10Wei}
 \sum_{t=1}^k\l_{i_t}(A+B) \leq \sum_{t=1}^k \l_{i_t}(A)+\sum_{t=1}^k\l_{t}(B).
\end{equation}

A reversed  inequality follows from (\ref{Eq:v10Wei}) by replacing $A$ and $B$ with $-A$ and $-B$, respectively:
\begin{equation}\label{Eq:v10Wei2}
\sum_{t=1}^k \l_{i_t}(A)+\sum_{t=1}^k\l_{n-t+1}(B)\leq \sum_{t=1}^k\l_{i_t}(A+B).
\end{equation}

There is a great amount of research on the partial sums of selected eigenvalues  (see, e.g., \cite{Lid50, Smi68, TF71} or \cite[Chap.~9]{MOA11} and \cite[Chap.~III]{BhaMA97})
as well as on the characterization of the eigenvalues of the sum of  Hermitian matrices (see, e.g., \cite{ BhatiaMonthlyHorn, AHorn62,  KTHorn99, KYHorn01}).

Inequalities analogous to  (\ref{Eq:v10Wei}) and (\ref{Eq:v10Wei2}) for the product of two  matrices are presented in \cite{Wang-Zhang-LAA-1992}:
If $A$ and $B$ are $n\times n$  positive semidefinite   matrices, then
\begin{equation}\label{Eq:WZ}
\sum\limits_{t=1}^k\lambda_{i_t}(A)\lambda_{n-t+1}(B)
\le \sum\limits_{t=1}^k\lambda_{i_t}(AB)\le\sum\limits_{t=1}^k\lambda_{i_t}(A)\lambda_t(B).
\end{equation}

The eigenvalues of the product of two Hermitian matrices need not be real. For example, for $A=\left ( {0 \atop 1}{1 \atop 0} \right )$ and
 $B=\left ( {1 \atop 1}{1 \atop -1} \right ),$  the eigenvalues of $AB$ are
 $1\pm i$. Thus,  inequalities (\ref{Eq:WZ}) do not extend to partial sums of eigenvalues of the product of two Hermitian matrices. However,   requiring one matrix to be positive semidefinite (PSD) ensures that  the eigenvalues of the product are all real; that is,  if $A$ is Hermitian and $B$ is PSD, then
 $AB$ and $B^{1/2}AB^{1/2}$ have the same eigenvalues, so $AB$ has only real eigenvalues.

Eigenvalue problem  is of central importance in matrix analysis and related areas. Usually,  inequalities for selected eigenvalues  involve  two Hermitian matrices for sum and two positive semidefinite matrices  for product.
Nevertheless,
 the results on the {partial sums of  selected eigenvalues} of the product of one PSD matrix and one Hermitian matrix are fragmentary.  
 A result of this kind is, for example, a  celebrated theorem of Ostrowski (see, e.g., \cite[p.~283]{HJ1.13}) which states that, for Hermitian $A$ and positive definite $B$,
 $\l_t(AB)=\theta_t \l_t(A)$, where $\theta_t\in [\l_n(B), \l_1(B)]$.

 The purpose of this paper is to present inequalities on the partial sums of selected eigenvalues of the product of a PSD matrix and a Hermitian matrix. Our results
  generalize some existing ones such as inequalities (\ref{Eq:WZ}).

\section{Eigenvalue inequalities  for Hermitian and PSD matrices}

In \cite[Theorem 3]{IEEEZZ06}, inequalities concerning $\sum_{t=1}^k\l_t(AB)$ are shown,  where $A$ is Hermitian and $B$ is positive semidefinite. These inequalities are about the sum of the  $k$ largest eigenvalues of $AB$.
In this section, we show inequalities for selected eigenvalues; that is, we present
inequalities concerning  $\sum_{t=1}^k\l_{i_t}(AB)$.

We borrow  Wielandt's  min-max representation (see, e.g.,  \cite[p.\,67]{BhaMA97}) for the eigenvalues of  Hermitian matrices,  which is used in the proof of our main result.

\begin{thm}[Wielandt \cite{Wielandt-PAMS-1955}] \label{thm1-2}
If $A\in \mathbb{C}^{n\times n}$ (the set of $n\times n$ complex matrices) is Hermitian  and
$1\le i_1<i_2<\cdots<i_k\le n$, then
\begin{eqnarray*}
\sum\limits_{t=1}^k\lambda_{i_t}(A)
 & = & \max\limits_{S_1\subset\cdots\subset S_k\subset \mathbb{C}^n\hfill \atop  \dim S_t=i_t}
  \min\limits_{\quad {x}_t\in S_t\hfill\atop ({x}_r,{x}_s)=\delta_{rs}}
  \sum\limits_{t=1}^k {{x}_t^*A{x}_t },
\end{eqnarray*}
where $\delta_{rs}$ is the Kronecker delta and $x^*$ is the conjugate transpose of $x\in \Bbb C^n$.
\end{thm}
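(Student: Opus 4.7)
The plan is to use the spectral decomposition of $A$ and prove the two inequalities implicit in the equality separately. Fix an orthonormal eigenbasis $u_1,\dots,u_n$ of $A$ with $Au_j=\lambda_j(A)u_j$.

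For the inequality $\sum_{t=1}^{k}\lambda_{i_t}(A)\le\max\min\sum_{t}x_t^{*}Ax_t$, I would exhibit the flag $S_t=\operatorname{span}\{u_1,\dots,u_{i_t}\}$ as a witness. Any orthonormal tuple $(x_t)_{t=1}^{k}$ with $x_t\in S_t$ expands as $x_t=\sum_{j=1}^{i_t}c_{tj}u_j$ with $\sum_{j}|c_{tj}|^2=1$, so $x_t^{*}Ax_t=\sum_{j\le i_t}|c_{tj}|^2\lambda_j(A)\ge\lambda_{i_t}(A)$, since every eigenvalue appearing in the sum is at least $\lambda_{i_t}(A)$. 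Summing over $t$ shows that the inner minimum over orthonormal tuples in this particular flag is already at least $\sum_{t=1}^{k}\lambda_{i_t}(A)$, which gives the desired inequality after taking the outer max.

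The reverse inequality is where the main work lies: given an \emph{arbitrary} flag $S_1\subset\cdots\subset S_k$ with $\dim S_t=i_t$, one must produce orthonormal $x_t\in S_t$ satisfying $\sum_{t}x_t^{*}Ax_t\le\sum_{t}\lambda_{i_t}(A)$. The natural attempt is to force $x_t\in U_{i_t}:=\operatorname{span}\{u_{i_t},\dots,u_n\}$, where the Rayleigh quotient is automatically $\le\lambda_{i_t}(A)$, and orthogonal to $x_1,\dots,x_{t-1}$; but the naive triple-intersection dimension count $i_t+(n-i_t+1)+(n-t+1)-2n=2-t$ is non-positive for $t\ge 2$, so a purely greedy construction fails. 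The main obstacle is replacing this greedy choice with a coupled one. My plan is to induct on $k$: pick $x_k$ in the one-dimensional intersection $S_k\cap U_{i_k}$ subject to $x_k\notin S_{k-1}$ (a genericity condition that must be verified case by case, with a side argument when $S_k\cap U_{i_k}\subset S_{k-1}$), then restrict $A$ to the $(n-1)$-dimensional space $x_k^{\perp}$ and invoke the inductive hypothesis on the flag $S_1\cap x_k^{\perp}\subset\cdots\subset S_{k-1}\cap x_k^{\perp}$, whose dimensions remain $i_1,\dots,i_{k-1}$ by the choice of $x_k$. Cauchy interlacing yields $\lambda_{i_t}(A|_{x_k^{\perp}})\le\lambda_{i_t}(A)$ for $t<k$, so the inductive output combined with $x_k^{*}Ax_k\le\lambda_{i_k}(A)$ closes the bound. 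The technically subtle point will be handling the degenerate configurations where the genericity $x_k\notin S_{k-1}$ fails.
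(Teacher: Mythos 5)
The paper does not actually prove this statement: Theorem~\ref{thm1-2} is quoted as a classical result of Wielandt, with the proof deferred to the references (e.g., \cite[p.~67]{BhaMA97}), so there is no in-paper argument to compare against and your attempt must stand on its own. Your proof of the inequality ``$\ge$'' is correct and is the standard easy half: for the eigenflag $S_t=\mathrm{span}\{u_1,\dots,u_{i_t}\}$, every unit $x_t\in S_t$ satisfies $x_t^*Ax_t\ge\lambda_{i_t}(A)$, so the inner minimum over that flag is already at least $\sum_t\lambda_{i_t}(A)$.

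The hard half, however, has a genuine gap, and it is not confined to the ``degenerate configurations'' you set aside. Your induction needs the truncated flag $S_t\cap x_k^{\perp}$, $t\le k-1$, to keep dimensions $i_1,\dots,i_{k-1}$, but $\dim(S_t\cap x_k^{\perp})=i_t$ holds only when $S_t\subseteq x_k^{\perp}$, i.e.\ when $x_k\perp S_{k-1}$. The condition $x_k\notin S_{k-1}$ that you impose is unrelated to this, and $x_k\perp S_{k-1}$ is generically unattainable inside $S_k\cap U_{i_k}$: the relevant intersection $S_k\cap U_{i_k}\cap S_{k-1}^{\perp}$ has expected dimension $i_k+(n-i_k+1)+(n-i_{k-1})-2n=1-i_{k-1}\le 0$. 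Generically each $\dim(S_t\cap x_k^{\perp})$ drops to $i_t-1$, and then interlacing runs the wrong way: the inductive hypothesis with indices $i_t-1$ bounds the sum by $\sum_t\lambda_{i_t-1}(A|_{x_k^\perp})\le\sum_t\lambda_{i_t-1}(A)$, which is at least (not at most) $\sum_t\lambda_{i_t}(A)$. A minimal counterexample to the construction: $n=k=2$, $A=\mathrm{diag}(2,1)$, $(i_1,i_2)=(1,2)$, $S_1=\mathrm{span}\{(1,1)^T\}$, $S_2=\mathbb{C}^2$. Your rule forces $x_2=e_2$ (since $S_2\cap U_2=\mathrm{span}\{e_2\}$), the genericity $x_2\notin S_1$ holds, yet $S_1\cap x_2^{\perp}=\{0\}$, so no admissible $x_1$ exists --- even though the theorem is trivially true here with $x_1=(1,1)^T/\sqrt{2}$, $x_2=(1,-1)^T/\sqrt{2}$, a choice in which $x_2\notin U_{i_2}$. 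This shows the required tuple cannot in general be built with each $x_t$ individually trapped in $U_{i_t}$; the standard proof instead establishes a global lemma (see \cite[Section III.3]{BhaMA97}) producing orthonormal $x_t\in S_t$ with $\mathrm{span}\{x_1,\dots,x_j\}\subseteq\mathrm{span}\bigl(\bigcup_{s\le j}(S_s\cap U_{i_s})\bigr)$ for every $j$, and then deduces $\sum_t x_t^*Ax_t\le\sum_t\lambda_{i_t}(A)$ from a trace/compression argument on these nested spans rather than from a per-vector Rayleigh-quotient bound. That coupling across all indices is the missing idea.
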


Denote the inertia of  an $n\times n$  Hermitian $A$ by
$(\pi_+, \nu_{-}, \delta_0)$, where $\pi_+, \nu_-, \delta_0$ are the numbers of
positive, negative and zero eigenvalues of $A$, respectively (see, e.g., \cite[p.\,255]{ZFZbook11}).
  Let $\n_A$ be the number of nonnegative eigenvalues of   $A$, namely,  $\n_A=\pi_++\delta_0$.  For any Hermitian matrix $A\in \Bbb C^{n\times n}$,  we have
\begin{equation}\label{2.1b}
\n_A+\n_{-A}=\pi_++\nu_-+2\delta_0=n+\delta_0.
\end{equation}

Let  $\k_A$  be the number
of nonnegative eigenvalues in $\{\lambda_{i_1}(A), \lambda_{i_2}(A), \dots, \lambda_{i_k}(A)\}$ for
the given index sequence $1\le i_1<i_2<\cdots<i_k\le n$.
So, if $\k_A=0$, 
then all $\lambda_{i_1}(A), \lambda_{i_2}(A), \dots, \lambda_{i_k}(A)$ are negative;
if $\k_A=k$, then each of $\lambda_{i_1}(A), \lambda_{i_2}(A), $ $\dots, \lambda_{i_k}(A)$ is positive or zero.
It is always true that $\k_A\leq \n_A$ and $\lambda_{\k_A}\ge \lambda_{\n_A}$.
\medskip

Now we are ready to present our main theorem.
In what follows,  our convention is that a summation  in the form
 $\sum_{t=p}^q$ over $t$ vanishes if $p>q$.

\begin{thm}\label{thm21}
Let $A\in  \Bbb C^{n\times n} $  be  Hermitian, let $B \in  \Bbb C^{n\times n}$ be  positive semidefinite, and  let $1\le i_1<i_2<\cdots<i_k\le n$.
 Then
 \begin{equation}\label{Eq:thm21-1a}
\sum\limits_{t=1}^k\lambda_{i_t}(AB)\le\sum\limits_{t=1}^{\k_A }
 \lambda_{i_t}(A)\lambda_t(B)+
   \sum\limits_{t=\k_A+1}^k\lambda_{i_{t}}(A)\lambda_{n-k+t}(B)
\end{equation}
and
\begin{equation}\label{thm21-1b}
\sum\limits_{t=1}^k\lambda_{i_t}(AB)\ge \sum\limits_{t=1}^{\k_A}\lambda_{i_t}(A)\lambda_{n-t+1}(B)+
\sum\limits_{t=\k_A+1}^k\lambda_{i_{t}}(A)\lambda_{k-t+1}(B).
\end{equation}
\end{thm}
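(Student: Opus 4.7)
The plan is to pass to the Hermitian matrix $C := B^{1/2}AB^{1/2}$, which shares its spectrum with $AB$, and to apply Wielandt's min-max (Theorem~\ref{thm1-2}). By continuity (replace $B$ by $B + \epsilon I$ and let $\epsilon \to 0$), it suffices to treat the case where $B$ is positive definite, so that $B^{-1/2}$ is well defined. Fix orthonormal eigenvectors $u_1, \ldots, u_n$ of $A$ corresponding to $\lambda_1(A) \ge \cdots \ge \lambda_n(A)$.

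For the lower bound (\ref{thm21-1b}), I will apply the max-min form of Theorem~\ref{thm1-2} with the nested flag $S_t := B^{-1/2}\,\mathrm{span}(u_1, \ldots, u_{i_t})$, which has $\dim S_t = i_t$. For any orthonormal system $\{x_t\}$ with $x_t \in S_t$, the vector $y_t := B^{1/2}x_t$ lies in $\mathrm{span}(u_1, \ldots, u_{i_t})$, so the key pointwise estimate $x_t^*Cx_t = y_t^*Ay_t \ge \lambda_{i_t}(A)\,\|y_t\|^2 = \lambda_{i_t}(A)\,x_t^*Bx_t$ holds regardless of the sign of $\lambda_{i_t}(A)$ (because $\lambda_{i_t}(A)$ is the smallest eigenvalue of $A$ on that subspace). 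It then remains to bound $\sum_t \lambda_{i_t}(A)\,x_t^*Bx_t$ from below by the right-hand side of (\ref{thm21-1b}). Split the sum at $t = \kappa_A$ (where $\lambda_{i_t}(A)$ changes sign) and apply the rearrangement inequality on each block, together with Courant--Fischer bounds on the sorted Rayleigh quotients of $B$ over the relevant orthonormal sub-families.

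For the upper bound (\ref{Eq:thm21-1a}), apply the dual min-max form of Wielandt's theorem, obtained from the max-min form by the substitution $A \mapsto -A$ together with the index relabeling $j_t = n - i_{k-t+1} + 1$ (paralleling the derivation of (\ref{Eq:v10Wei2}) from (\ref{Eq:v10Wei})). Use the flag $T_t := B^{-1/2}\,\mathrm{span}(u_{i_{k-t+1}}, \ldots, u_n)$ of dimension $j_t$, on which $y_t := B^{1/2}x_t \in \mathrm{span}(u_{i_{k-t+1}}, \ldots, u_n)$ and the reverse pointwise estimate $x_t^*Cx_t \le \lambda_{i_{k-t+1}}(A)\,x_t^*Bx_t$ holds. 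After the reverse reindexing $s = k - t + 1$ and the analogous split at $\kappa_A$, the same rearrangement/Courant--Fischer machinery yields (\ref{Eq:thm21-1a}).

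The principal obstacle is the rearrangement step on the ``negative-$\lambda(A)$'' block $\{\kappa_A+1, \ldots, k\}$: since only the $k - \kappa_A$ vectors $\{x_{\kappa_A+1}, \ldots, x_k\}$ form the relevant orthonormal sub-family, the appropriate Courant--Fischer estimate compares the sorted Rayleigh quotients over this $(k - \kappa_A)$-element family to the eigenvalues of $B$. This is precisely what produces the ``offset'' indices $\lambda_{k-t+1}(B)$ and $\lambda_{n-k+t}(B)$ in the bounds, rather than the naive $\lambda_{n-t+1}(B)$ and $\lambda_t(B)$ one would obtain from applying Courant--Fischer to the full $k$-element orthonormal family. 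Correctly pairing the signs of $\lambda_{i_t}(A)-\lambda_{i_{t+1}}(A)$ with the direction (upper vs.\ lower) of the Courant--Fischer estimate on each block is the delicate bookkeeping step that makes the split at $\kappa_A$ produce exactly the inequalities asserted.
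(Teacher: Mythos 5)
Your proposal is correct in outline, but it is organized quite differently from the paper's proof, which proceeds by a five-case analysis on the position of the index set relative to $n_A$: the ``pure sign'' cases (all selected eigenvalues of $A$ nonnegative, or all nonpositive) are reduced to the known PSD$\times$PSD inequalities \eqref{Eq:WZ} of Wang--Zhang, the upper bound in the nonnegative case is obtained by the splitting $A=A_++A_-$ together with the monotonicity $\l_t(AB)\le\l_t(A_+B)$, Wielandt's representation is invoked only for the lower bound in that case, and the mixed case is handled by splitting the index set at $k_A$ and adding the two block estimates. You instead run the Wielandt machinery uniformly: the max--min form with the flag $S_t=B^{-1/2}\,{\rm span}(u_1,\dots,u_{i_t})$ for the lower bound and its dual (via $A\mapsto -A$ and $j_t=n-i_{k-t+1}+1$) for the upper bound, with the sign split at $k_A$ pushed into the final estimation of $\sum_t\l_{i_t}(A)\,x_t^*Bx_t$. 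This buys a single self-contained argument that never quotes \eqref{Eq:WZ} and treats both bounds and all sign configurations symmetrically; the price is that all of the content of the offset indices $\l_{n-k+t}(B)$ and $\l_{k-t+1}(B)$ is concentrated in the ``rearrangement plus Courant--Fischer'' step, which you only sketch. That step does close: the cleanest way to finish it is not the rearrangement inequality per se but the trace inequality \eqref{HJp255} applied to the two PSD matrices $\sum_{t\le k_A}\l_{i_t}(A)x_tx_t^*$ and $-\sum_{t>k_A}\l_{i_t}(A)x_tx_t^*$ (equivalently, Abel summation against the Ky Fan partial-sum bounds for sorted Rayleigh quotients of an orthonormal family); the second matrix has nonzero eigenvalues $-\l_{i_k}(A)\ge\cdots\ge-\l_{i_{k_A+1}}(A)$, and it is this reversal of order within the negative block, combined with the block having only $k-k_A$ vectors, that produces exactly the indices $\l_{k-t+1}(B)$ and $\l_{n-k+t}(B)$ asserted in \eqref{Eq:thm21-1a} and \eqref{thm21-1b}. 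Your pointwise estimates $x_t^*Cx_t\ge\l_{i_t}(A)\,x_t^*Bx_t$ and its reverse are valid regardless of sign, as you note, so once the trace-inequality bookkeeping is written out the argument is complete.
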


\begin{proof}
Note that if $k_A=0$, then the first terms (summations)  on the right-hand sides of  (\ref{Eq:thm21-1a}) and (\ref{thm21-1b}) are absent. If $k_A=k$, then the second terms disappear. For $1\le k_A<k$, the first terms on the right-hand sides  of  (\ref{Eq:thm21-1a}) and (\ref{thm21-1b}) are  nonnegative, while the second terms are nonpositive.
\medskip

We divide the proof of the theorem into five cases.

\medskip
\noindent
Case (1). If $\n_A=n$, then $A$  is PSD and $\k_A=k$.
 (\ref{Eq:thm21-1a}) and (\ref{thm21-1b}) are   (\ref{Eq:WZ}).

\medskip
\noindent
Case (2). If $\n_A=0$, then $\k_A=0$ and $-A$  is positive definite.  We set $j_t$ as follows:
 $$1\leq j_1=n-i_k+1<\cdots< j_t=n-i_{k-t+1}+1<\cdots <j_k=n-i_1+1\leq n.$$ 
By  inequalities (\ref{Eq:WZ}), we have
\begin{equation*}\label{thm21-2}
\sum\limits_{t=1}^k\lambda_{j_{t}}(-A)\lambda_{n-t+1}(B)
\le \sum\limits_{t=1}^k\lambda_{j_{t}}(-AB)\le\sum\limits_{t=1}^k\lambda_{j_{t}}(-A)\lambda_t(B).
\end{equation*}
Because  $\lambda _{j_t}(-A)=-\lambda_{n-j_t+1}(A)$ and  $j_t=n-i_{k-t+1}+1$,  we obtain
\begin{equation}\label{thm21-3}
\sum\limits_{t=1}^k\lambda_{i_{k-t+1}}(A)\lambda_t(B)
\le \sum\limits_{t=1}^k\lambda_{i_{k-t+1}}(AB)\le\sum
\limits_{t=1}^k\lambda_{i_{k-t+1}}(A)\lambda_{n-t+1}(B).
\end{equation}
Note that on the right hand side,  $\lambda_{i_{k-t+1}}(A)$ is paired with (multiplied by)  $\lambda_{n-t+1}(B)$, $t=1, 2, \dots, k$, namely, $\lambda_{i_{r}}(A)$ is paired with $\lambda_{n-k+r}(B)$, $r=1, 2, \dots, k$.
Likewise, on the left, $\lambda_{i_{r}}(A)$ is paired with $\lambda_{k-r+1}(B)$,
$r=1, 2, \dots, k$.
It follows that
\begin{equation}\label{Eq:thm21-3b}
\sum\limits_{r=1}^k\lambda_{i_{r}}(A)\lambda_{k-r+1}(B)
\le \sum\limits_{r=1}^k\lambda_{i_{r}}(AB)\le\sum\limits_{r=1}^k\lambda_{i_{r}}(A)\lambda_{n-k+r}(B).
\end{equation}
Inequalities (\ref{Eq:thm21-1a}) and (\ref{thm21-1b}) follow immediately.
\medskip

If $A$ has no positive eigenvalues, then $-A$ is PSD which can be dealt with as above. We assume below that $A$ has both positive and negative eigenvalues.

\medskip
\noindent
Case (3). Let $1\le \pi_+$, $1\le \nu_-$, and $1\le i_1<i_2<\cdots<i_k\le \n_A$. Then
all $\lambda_{i_1}(A), \lambda_{i_2}(A), $ $\dots, \lambda_{i_k}(A)$ are nonnegative, namely,  $\k_A=k$.
 We show that
\begin{equation}\label{thm21-4a}
\sum\limits_{t=1}^k\lambda_{i_t}(A)\lambda_{n-t+1}(B)
\le \sum\limits_{t=1}^k\lambda_{i_t}(AB)\le\sum\limits_{t=1}^k\lambda_{i_t}(A)\lambda_t(B).
\end{equation}

We may assume that $A$ is a diagonal matrix as the inequalities are invariant under unitary similarity. For the upper bound, using the
 approach of splitting (see \cite[p.~381]{LiMathias99} or \cite[p.~250]{HJ1.13}), we write
$A=A_++A_-$, where $A_+$ is the diagonal matrix with all the positive (if any) eigenvalues of $A$ on the main diagonal (plus some zeros), and $A_-$ is the diagonal matrix with all the negative (if any) eigenvalues of $A$ on the main diagonal (plus some zeros).
Then $A_+$ is positive semidefinite and $\lambda_{i_1}(A), \lambda_{i_2}(A),$ $\dots, \lambda_{i_k}(A)$ are all contained on the main diagonal of $A_+$ as $i_k\leq n_A$.

Since
$A\leq A_+$ (i.e., $A_+-A$ is PSD), we have $\l_t(AB)\le \l_t(A_+B)$ for every $t$, and
moreover, $\lambda_{i_t}(A_+)=\lambda_{i_t}(A)$ for $t\leq k$.
For the upper bound, by (\ref{Eq:WZ}), we get
\begin{equation*}
 \sum\limits_{t=1}^k\lambda_{i_t}(AB)\le
 \sum\limits_{t=1}^k\lambda_{i_t}(A_+B)\le\sum\limits_{t=1}^k\lambda_{i_t}(A_+)\lambda_t(B)=
 \sum\limits_{t=1}^k\lambda_{i_t}(A)\lambda_t(B).
\end{equation*}
For the lower bound, we need to use the Wielandt's min-max representation.

Bear in mind that  $\lambda_t(A)\ge 0$, $t=1, 2, \dots,\n_A$, and
$\lambda_{i_t}(A)\ge 0$, $t=1, 2, \dots, k$.
Let $A={\rm{diag}}(\lambda_1 (A), \lambda_2(A), \dots,\lambda_n (A))$.
The standard column unit vectors
$e_i=(0, \dots, 0, 1, 0\cdots, 0)^T$ 
are eigenvectors corresponding to $\l_i(A)$, $i=1, 2, \dots, n$.

We assume $B$ is nonsingular (or use continuity with $B_{\varepsilon}=B+\varepsilon I$, $\varepsilon>0$). Let
\begin{equation*}
S_t=
{\rm{Span}}\big(B^{-\frac{1}{2}} {e}_{1}, B^{-\frac{1}{2}} {e}_{2},
  \dots,B^{-\frac{1}{2}}{e}_{i_t} \big), \quad t=1, 2, \dots, k.
\end{equation*}
Then $S_1\subset S_2\subset  \cdots \subset S_k$ and $\dim (S_t)=i_t$.
The min-max representation reveals  
\begin{equation*}
\sum\limits_{t=1}^k\lambda_{i_t}(AB)
\ge \min\limits_{\substack{ {u}_t  \in S_t  \\ (u_r, u_s) = \delta_{rs} }}
 \sum\limits_{t = 1}^k {u}_t^* B^{\frac{1}{2}} AB^{\frac{1}{2}}{u}_t
= \min\limits_{\substack{ {u}_t  \in S_t  \\ (u_r, u_s) = \delta_{rs} }}
\sum\limits_{t=1}^k\frac{{u}_t^*B^{\frac{1}{2}}AB^{\frac{1}{2}}{u}_t}
{(B^{\frac{1}{2}}u_t)^* (B^{\frac{1}{2}}{u}_t)}\cdot {u}_t^* B{u}_t.
\end{equation*}
For    ${u}_t\in S_t$,  let $u_t=\sum\limits_{j=1}^{i_t} a_jB^{-\frac{1}{2}}{e}_j$,   $a_1, a_2, \dots, a_{i_t}\in\mathbb{C}$, $t=1, 2, \dots, k$.   Then we have
\[
\frac{{u}_t^*B^{\frac{1}{2}}AB^{\frac{1}{2}}{u}_t}
{(B^{\frac{1}{2}}u_t)^* (B^{\frac{1}{2}} {u}_t)}
=\frac{1}{\sum\limits_{j=1}^{i_t} |a_j|^2}\sum\limits_{j=1}^{i_t}|a_j|^2\l_j\ge\lambda_{i_t}(A), \;\; t=1, 2, \dots,k.
\]
Let $C=\sum_{t=1}^k \lambda_{i_t}(A) u_tu_t^*\in \Bbb C^{n\times n}$, where
$\{u_1, u_2, \dots, u_k\}$ is an orthonormal set in $\Bbb C^n$.
Then $C$ is PSD and $\l_t(C)=\l_{i_t}(A)$ for each $t$.
We obtain
\begin{eqnarray*}
{\sum\limits_{t=1}^k\lambda_{i_t}(AB)}
& \ge & \min\limits_{\substack{ {u}_t  \in S_t  \\ (u_r, u_s) = \delta_{rs} }}
\sum\limits_{t=1}^k\lambda_{i_t}(A){u}_t^* B{u}_t=\min\limits_{\substack{ {u}_t  \in S_t  \\ (u_r, u_s) = \delta_{rs} }}\tr (CB)\\
& \ge & \min\limits_{\substack{ {u}_t  \in S_t  \\ (u_r, u_s) = \delta_{rs} }} \sum\limits_{t=1}^k\lambda_{t}(C)\lambda_{n-t+1} (B)
=\sum\limits_{t=1}^k\lambda_{i_t}(A)\lambda_{n-t+1} (B).
\end{eqnarray*}

We note here that it would be nice if we could derive the lower bound from the upper bound with $A$ replaced by $-A$, without using the Wielandt's min-max representation. However, this approach doesn't work as $\l_{i_t}(-A)\leq 0$. Moreover, like the lower bound, the upper bound can also be obtained by using the min-max representation with suitably  selected subspaces.

\medskip
\noindent
Case (4).  Let $1\le \pi_+$, $1\le \nu_-$, and  $\n_A< i_1<i_2<\cdots<i_k\le n$.  Then
none of $\lambda_{i_1}(A), \lambda_{i_2}(A), \dots, \lambda_{i_k}(A)$ is positive.  Consider $-A$ with
$1\le j_1=n-i_k+1<\cdots<j_k=n-i_1+1\leq n$.
Since $j_k=n-i_1+1\leq  n-\n_A\le n_{-A}$ (see (\ref{2.1b})), we apply case (3) to $-A$ and $B$ to get the desired inequalities (\ref{Eq:thm21-3b})  as in case (2).

\medskip
\noindent
Case (5).  Let  $1\le \pi_+$, $1\le \nu_-$, and $1 \le i_1<\cdots<i_{\k_A}\le \n_A<i_{\k_A+1}<\cdots< i_k\le n$. \nolinebreak
Then
$$\sum\limits_{t=1}^k\lambda_{i_t}(AB)=
\sum\limits_{t=1}^{k_A}\lambda_{i_t}(AB)+
\sum\limits_{t=k_A+1}^k\lambda_{i_t}(AB). $$

For $1 \le i_1<\cdots<i_{\k_A}\le \n_A$, using (\ref{thm21-4a}), we have
\begin{equation}\label{thm21-6}
\sum\limits_{t=1}^{\k_A}\lambda_{i_t}(A)\lambda_{n-t+1}(B)
\le\sum\limits_{t=1}^{\k_A}\lambda_{i_t}(AB)\le\sum\limits_{t=1}^{\k_A}\lambda_{i_t}(A)\lambda_t(B).
\end{equation}

For $\n_A<i_{\k_A+1}<\cdots< i_k\le n$,  set $j_t=i_{k_A+t}$, $t=1, 2, \dots, k'$, where $k'=k-k_A$. Then $\n_A<j_{1}<\cdots< j_{k'}\le n$. By case (4),  we have
(for the upper bound)
\begin{eqnarray}\label{thm21-7a}
 \sum\limits_{t=k_A+1}^{k}\lambda_{i_t}(AB) & = & \sum\limits_{t=1}^{k'}\lambda_{j_t}(AB)\notag \\
  &  \le & \sum\limits_{t=1}^{k'}\lambda_{j_{t}}(A)\lambda_{n-k'+t}(B) 
  \notag  \\
   & = & \sum\limits_{t=1}^{k'}\lambda_{i_{k_A+t}}(A)\lambda_{n-k'+t}(B)\notag  \\
    & = & \sum\limits_{r=k_A+1}^{k}\lambda_{i_{r}}(A)\lambda_{n-k+r}(B)
\end{eqnarray}
and (for the lower bound)
\begin{eqnarray}\label{thm21-7b}
\sum\limits_{t=k_A+1}^{k}\lambda_{i_t}(AB) & = & \sum\limits_{t=1}^{k'}\lambda_{j_t}(AB)\notag \\
  &  \ge & \sum\limits_{t=1}^{k'}\lambda_{j_{t}}(A)\lambda_{k'-t+1}(B)\notag  \\
   & = & \sum\limits_{t=1}^{k'}\lambda_{i_{k_A+t}}(A)\lambda_{k'-t+1}(B)\notag  \\
    & = & \sum\limits_{r=k_A+1}^{k}\lambda_{i_{r}}(A)\lambda_{k-r+1}(B).
\end{eqnarray}

Combing  inequalities \eqref{thm21-6}, \eqref{thm21-7a} and \eqref{thm21-7b} results in \eqref{Eq:thm21-1a} and (\ref{thm21-1b}).
\end{proof}

\begin{remark}  Li and Mathias \cite[Theorem 2.3]{LiMathias99} showed some inequalities in partial products of the form
 $\prod_{t=1}^k\frac{\l_{i_t}(AB)}{\l_{i_t}(A)}$, where $A$ is Hermitian and $B=$ 
 is PSD.

It is known that
for positive numbers,
 inequalities of partial products   imply those of partial sums.
 Simply put in the language of majorization, log-majorization implies weak-majorization (see, e.g.,  \cite[p.~232]{HiaiBK14} or \cite[p.~345]{ZFZbook11}). Thus,
 it is tempting to obtain the results on partial sums from Li and Mathias' result on  partial products. However, this is only possible for
  $|{\l_{i_t}(AB)}|$ and $|{\l_{i_t}(A)}|$ by
  observing  that $\frac{\l_{i_t}(AB)}{\l_{i_t}(A)}> 0$. Since
 ${\l_{i_t}(AB)}$ and ${\l_{i_t}(A)}$ are paired with the same sign (maybe both negative), we don't see how the absolute values are dropped so that
 ${\l_{i_t}(AB)}$ and ${\l_{i_t}(A)}$  appear in a partial sum without being paired as a quotient.
 \end{remark}

\begin{remark}
 Hoffman's min-max representation (see, e.g., \cite[2.17]{Amir1990}) in the product form $\prod_{t=1}^k\l_{i_t}(\cdot)$ for positive semidefinite matrices does not generalize to Hermitian matrices
 as Li and Mathias showed by example \cite[pp.~411-412]{LiMathias99}; that is,  there is no multiplicative analog  of Wielandt's min-max representation (Theorem \ref{thm1-2}) 
 for Hermitian matrices. Therefore, it is impossible to
 derive sum inequalities  $\sum_{t=1}^k\l_{i_t}(\cdot)$ from product inequalities $\prod_{t=1}^k\l_{i_t}(\cdot)$ for Hermitian matrices through majorization.
 In view of this,  our Theorem \ref{thm21} appears to be  important.
\end{remark}

\begin{example} Let $n=3$,  $k=2$, and let
\begin{align*}
A = \left({\begin{array}{*{20}c}
   1 & 2 & 0 \\
   2 & 1 & 0 \\
   0 & 0 & -4
\end{array}}\right), \quad B =\left({\begin{array}{*{20}c}
   2   & -1& 0 \\
   - 1 & 2 & 0 \\
   0   & 0 & 2
\end{array}} \right).
\end{align*}
Then
$$\begin{array}{lll}
\lambda_{1}(A)=3, & \lambda_{2}(A)=-1, & \lambda_{3}(A)=-4;  \\
\lambda_{1}(B)=3,& \lambda_{2}(B)=2, & \lambda_{3}(B)=1;  \\
\lambda_{1}(AB)=3,&\lambda_{2}(AB)=-3, & \lambda_{3}(AB)=-8.
\end{array}$$
Thus, $n_A=1$.
We consider the cases (I) $i_1=1,$ $i_2=2,$ $ k_A=1$; (II) $i_1=1,$ $ i_2=3,  $ $k_A=1$; and (III) $i_1=2,$ $ i_2=3,$ $ k_A=0$, to get, respectively,
\begin{eqnarray*}
\lefteqn{\sum\limits_{t=1}^{k_A } \lambda_{i_t}(A)\lambda_{t}(B)
   +\sum\limits_{t=k_A+1}^k\lambda_{i_{t}}(A)\lambda_{n-k+t}(B) }\\
  & & =\begin{cases}
   \lambda_{1}(A)\lambda_{1}(B)+\lambda_{2}(A)\lambda_{3}(B)=8, & \mbox{(I)} \\
   \lambda_{1}(A)\lambda_{1}(B)+\lambda_{3}(A)\lambda_{3}(B)=5,
       & \mbox{(II)} \\
    \lambda_{2}(A)\lambda_{2}(B)+\lambda_{3}(A)\lambda_{3}(B)=-6,
      & \mbox{(III)}\end{cases}
\end{eqnarray*}

$$
\sum\limits_{t=1}^k\lambda_{i_t}(AB)=\begin{cases}
   \lambda_{1}(AB)+\lambda_{2}(AB)
=0, &\mbox{(I)}\\
   \lambda_{1}(AB)+\lambda_{3}(AB)
=-5,&\mbox{(II)} \\
    \lambda_{2}(AB)+\lambda_{3}(AB)
=-11,&\mbox{(III)}
\end{cases}
$$
and
\begin{eqnarray*}
\lefteqn{\sum\limits_{t=1}^{k_A}\lambda_{i_t}(A)\lambda_{n-t+1}(B)+\sum\limits_{t=k_A+1}^k\lambda_{i_{t}}(A)\lambda_{k-t+1}(B)}\\
 & & =\begin{cases}
   \lambda_{1}(A)\lambda_{3}(B)+\lambda_{2}(A)\lambda_{1}(B)=0, &\mbox{(I)}  \\
   \lambda_{1}(A)\lambda_{3}(B)+\lambda_{3}(A)\lambda_{1}(B)=-9,
    &\mbox{(II)}\\
    \lambda_{2}(A)\lambda_{2}(B)+\lambda_{3}(A)\lambda_{1}(B)=-14.
       &\mbox{(III)}
\end{cases}
\end{eqnarray*}
\end{example}

\medskip

Recall the
 trace inequality (see, e.g., \cite[p.\,78]{BhaMA97} or \cite[p.\,255]{HJ1.13})
 for Hermitian $A, B$:
\begin{equation}\label{HJp255}
\sum_{t=1}^n \l_t(A)\l_{n-t+1}(B)\leq \tr (AB)=\sum_{t=1}^n\l_t(AB) \leq \sum_{t=1}^n \l_t(A)\l_{t}(B).
\end{equation}

As we noted in Section 1, the product of two Hermitian matrices may have nonreal eigenvalues. So, the trace in (\ref{HJp255}) cannot be replaced by partial sums in general.  
Theorem \ref{thm21} presents partial sum inequalities for the product of
one Hermitian matrix and one PSD matrix.

\medskip

We present a few  results that are immediate from Theorem \ref{thm21}.

\begin{cor}\label{cor21-1-2}
Let $A \in \mathbb{C}^{n\times n}$ be a stable Hermitian  matrix {\rm (}i.e., the eigenvalues of $A$ are located on the left  half-plane{\rm)} and
  $B\in \mathbb{C}^{n\times n}$  be  positive semidefinite. Then
\begin{equation*}\label{cor21-1}
\sum\limits_{t=1}^k\lambda_{i_{t}}(A)\lambda_{k-t+1}(B)
\le \sum\limits_{t=1}^k\lambda_{i_t}(AB)\le\sum\limits_{t=1}^k\lambda_{i_{t}}(A)\lambda_{n-k+t}(B).
\end{equation*}
\end{cor}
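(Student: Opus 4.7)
The plan is to deduce the corollary as a direct specialization of Theorem \ref{thm21}. Since $A$ is stable Hermitian, every eigenvalue of $A$ lies strictly in the open left half-plane, and being real (because $A$ is Hermitian), every eigenvalue is negative. Hence $\pi_+ = 0$ and $\delta_0 = 0$, so $\n_A = \pi_+ + \delta_0 = 0$.

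Because $\n_A = 0$, the set $\{\lambda_{i_1}(A),\dots,\lambda_{i_k}(A)\}$ contains no nonnegative entry, and therefore $\k_A = 0$ for any index sequence $1\le i_1<\cdots<i_k\le n$. By the convention that $\sum_{t=p}^q$ vanishes when $p>q$, the first summand on the right-hand side of each of \eqref{Eq:thm21-1a} and \eqref{thm21-1b} (the sum from $t=1$ to $\k_A = 0$) disappears.

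Substituting $\k_A = 0$ into the upper bound \eqref{Eq:thm21-1a} yields
\begin{equation*}
\sum_{t=1}^k \lambda_{i_t}(AB) \le \sum_{t=1}^k \lambda_{i_t}(A)\lambda_{n-k+t}(B),
\end{equation*}
and substituting $\k_A = 0$ into the lower bound \eqref{thm21-1b} yields
\begin{equation*}
\sum_{t=1}^k \lambda_{i_t}(AB) \ge \sum_{t=1}^k \lambda_{i_t}(A)\lambda_{k-t+1}(B),
\end{equation*}
which together are exactly the stated inequalities. There is no real obstacle here; the corollary is simply the boundary case $\k_A = 0$ of Theorem \ref{thm21} (equivalently, case (2) or case (4) of the proof, depending on the location of the indices relative to $\n_A = 0$), and the only point worth emphasizing is that stability in the Hermitian setting forces \emph{all} eigenvalues to be negative, which is what guarantees $\k_A = 0$ for every choice of indices.
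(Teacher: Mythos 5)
Your proposal is correct and is essentially the paper's own (one-line) proof: specialize Theorem \ref{thm21} by observing that stability forces $\k_A=0$, so the first summations vanish and the second ones run over all of $t=1,\dots,k$. The only difference is that the paper also covers the reading of ``stable'' that permits zero eigenvalues (closed left half-plane), noting that then $\lambda_{i_t}(A)=0$ for $t\le \k_A$, so the extra terms in \eqref{Eq:thm21-1a} and \eqref{thm21-1b} vanish anyway and the same bounds result.
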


\begin{proof} Note that $k_A=0$ or $\lambda_{i_t}(A)=0$ for $t=1, 2, \dots, \k_A$ in \eqref{Eq:thm21-1a} and (\ref{thm21-1b}).
\end{proof}

\begin{cor}\label{cor21-1-3Z}
Let $A\in \mathbb{C}^{n\times n}$ be Hermitian  and $B\in \Bbb C^{n\times n}$ be   positive semidefinite. Then for any positive eigenvalue $\lambda_s(AB)$ and  negative eigenvalue $\lambda_t(AB)$ (if any)
\begin{equation*}\label{thm21-10Z}
\lambda_{s}(A)\lambda_{n}(B)+\lambda_{t}(A)\lambda_{1}(B)
\le \lambda_{s}(AB)+\lambda_{t}(AB)\le\lambda_{s}(A)\lambda_1(B)+
\lambda_{t}(A)\lambda_n(B).
\end{equation*}
\end{cor}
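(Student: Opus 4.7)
The plan is to apply Theorem~\ref{thm21} with $k=2$ and index sequence $(i_1,i_2)=(s,t)$. Since the eigenvalues of $AB$ are arranged in decreasing order and $\l_s(AB)>0>\l_t(AB)$, we automatically have $s<t$, so the hypothesis $i_1<i_2$ is satisfied. The whole argument then hinges on verifying $\k_A=1$ for this choice of indices: if so, each of the two sums on the right-hand side of \eqref{Eq:thm21-1a} and \eqref{thm21-1b} collapses to a single term, and the $B$-indices that appear are exactly $1$ and $n$, producing precisely the bounds in the corollary.

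To establish $\k_A=1$ I would invoke an inertia comparison between $AB$ and $A$. Since $AB$ and the Hermitian matrix $B^{1/2}AB^{1/2}$ share their nonzero eigenvalues, it suffices to control the inertia of the latter. A short subspace-dimension argument suffices: on any subspace $W$ on which $B^{1/2}AB^{1/2}$ is positive definite, the linear map $x\mapsto B^{1/2}x$ is necessarily injective (else the form value would vanish on the kernel vector), and its image is a subspace of dimension $\dim W$ on which $A$ is positive definite; this yields $\pi_+(AB)\le\pi_+(A)$, and the dual argument gives $\nu_-(AB)\le\nu_-(A)$. Consequently $\l_s(AB)>0$ forces $s\le\pi_+(AB)\le\pi_+(A)\le\n_A$, whence $\l_s(A)>0$; dually $\l_t(AB)<0$ forces $t\ge n-\nu_-(AB)+1\ge\n_A+1$, whence $\l_t(A)<0$. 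Thus exactly one of $\l_s(A),\l_t(A)$ is nonnegative, giving $\k_A=1$ as needed.

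With $k=2$, $\k_A=1$, $i_1=s$, $i_2=t$, the right-hand side of \eqref{Eq:thm21-1a} collapses to $\l_s(A)\l_1(B)+\l_t(A)\l_n(B)$, and the right-hand side of \eqref{thm21-1b} collapses to $\l_s(A)\l_n(B)+\l_t(A)\l_1(B)$, which are precisely the claimed bounds. The only step with genuine content beyond this bookkeeping is the inertia comparison $\pi_+(AB)\le\pi_+(A)$ and $\nu_-(AB)\le\nu_-(A)$ for PSD $B$; this will be the main obstacle if one forgets that $B$ may be singular (so that $B^{1/2}$ has a nontrivial kernel), but once it is noted, the rest follows immediately from Theorem~\ref{thm21}.
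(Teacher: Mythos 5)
Your proposal is correct and follows essentially the same route as the paper: apply Theorem~\ref{thm21} with $k=2$, $i_1=s$, $i_2=t$, and $\k_A=1$, so that each summation in \eqref{Eq:thm21-1a} and \eqref{thm21-1b} reduces to a single term. The only difference is that you explicitly justify $\lambda_s(A)>0>\lambda_t(A)$ via the inertia comparison $\pi_+(AB)\le\pi_+(A)$, $\nu_-(AB)\le\nu_-(A)$, a step the paper asserts without proof; your argument for it is sound.
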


\begin{proof}
Take $k=2$ and $1\leq i_1=s<i_2=t\leq n$. Then $\lambda_s(A)>0>\lambda_{t}(A)$.
In \eqref{Eq:thm21-1a}, there is one term in each of the two summations; the same is true for (\ref{thm21-1b}).
\end{proof}

\begin{cor}\label{cor21-1-3}
Let $A\in \mathbb{C}^{n\times n}$ be Hermitian  and $B\in \Bbb C^{n\times n}$ be   positive semidefinite. If $\lambda_{p}(AB)$ is the smallest positive eigenvalue of $AB$ and $\lambda_{q}(AB)$ is the largest negative eigenvalue of $AB$ (if any), then their distance (gap near 0)  is bounded as
$$\lambda_{p}(AB)-\lambda_{q}(AB)\leq \left [ \lambda_{p}(A)-\lambda_{q}(A)\right ]\lambda_1(B). $$
\end{cor}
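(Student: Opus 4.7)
The plan is to bound $\lambda_p(AB)$ from above and $\lambda_q(AB)$ from below separately, using Theorem~\ref{thm21} with $k=1$ in each case, and then subtract. No machinery beyond the $k=1$ instance of the main theorem is needed.

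First I would take $k=1$ and $i_1=p$ in Theorem~\ref{thm21}. The hypothesis $\lambda_p(AB)>0$ forces $\lambda_p(A)\ge 0$: otherwise $\k_A=0$, and \eqref{Eq:thm21-1a} would reduce to $\lambda_p(AB)\le \lambda_p(A)\lambda_n(B)\le 0$, contradicting positivity. Hence $\k_A=1$, and \eqref{Eq:thm21-1a} collapses to the single term
$$\lambda_p(AB)\le \lambda_p(A)\lambda_1(B).$$
Symmetrically, taking $k=1$ and $i_1=q$, the hypothesis $\lambda_q(AB)<0$ forces $\lambda_q(A)<0$: otherwise $\k_A=1$ in \eqref{thm21-1b} and we would obtain $\lambda_q(AB)\ge \lambda_q(A)\lambda_n(B)\ge 0$, again a contradiction. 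Therefore $\k_A=0$, and \eqref{thm21-1b} gives
$$\lambda_q(AB)\ge \lambda_q(A)\lambda_1(B).$$

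Subtracting these two estimates, the common factor $\lambda_1(B)$ collects on the right-hand side and yields
$$\lambda_p(AB)-\lambda_q(AB)\le \bigl[\lambda_p(A)-\lambda_q(A)\bigr]\lambda_1(B),$$
which is the claim. The only delicate point is pinning down the signs of $\lambda_p(A)$ and $\lambda_q(A)$ so that the two $k=1$ estimates are paired against the same $\lambda_1(B)$; this is handled by the contradictory-direction arguments above. Beyond this bookkeeping there is no real obstacle.
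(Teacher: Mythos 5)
Your argument is correct and follows essentially the same route as the paper: apply Theorem~\ref{thm21} with $k=1$ to get $\lambda_{p}(AB)\le\lambda_{p}(A)\lambda_1(B)$ and $\lambda_{q}(AB)\ge\lambda_{q}(A)\lambda_{1}(B)$, then subtract. The only cosmetic difference is that you establish the signs $\lambda_p(A)\ge 0$ and $\lambda_q(A)<0$ by contradiction from the $k=1$ inequalities themselves, whereas the paper simply asserts them.
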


\begin{proof} Since $\lambda_{p}(AB)>0$  and $\lambda_{q}(AB)<0$, we have $\lambda_{p}(A)>0>\lambda_{q}(A)$.
Setting  $k=1$ and $i_1=t$, for each $t=1, 2,  \dots, n$ in Theorem \ref{thm21}, 
 we obtain
\begin{equation*}\label{thm21-10a}
\lambda_{t}(A)\lambda_{n}(B)
\le \lambda_{t}(AB)\le\lambda_{t}(A)\lambda_1(B), \quad \mbox{if $\lambda_{t}(A)\ge0$}
\end{equation*}
and
\begin{equation*}\label{thm21-11}
\lambda_{t}(A)\lambda_{1}(B)
\le \lambda_{t}(AB)\le\lambda_{t}(A)\lambda_n(B), \quad \mbox{if $\lambda_{t}(A)\le0$}.
\end{equation*}
In particular,
$$\lambda_{p}(AB)\le\lambda_{p}(A)\lambda_1(B)\quad \mbox{and}
\quad \lambda_{q}(A)\lambda_{1}(B) \le \lambda_{q}(AB).$$
The desired inequality follows immediately by subtraction.
\end{proof}

Corollary \ref{cor21-1-3} provides  an estimate for the gap between two  eigenvalues of $AB$ near zero (on both sides) in terms of the eigenvalues of  $A$ and $B$.  The ratio $[\lambda_{p}(AB)-\lambda_{q}(AB)]/ [\lambda_{p}(A)-\lambda_{q}(A)]$ is bounded above by  the spectral norm of $B$. Thus,  if neither $A$ nor $-A$ is  stable Hermitian,
then an application (multiplication) of a strictly contractive PSD matrix $B$ to $A$ narrows
the gap
between the positive and negative eigenvalues of $A$.
 (A matrix is said to be strictly contractive if its spectral norm is less  than one.)


\section{Comparison of the bounds}
Recall from the proof of Theorem \ref{thm21} the splitting of the real diagonal matrix $A$:  
$A=A_++A_-$, where $A_+$ is the diagonal matrix with all  positive (if any) eigenvalues of $A$ on the main diagonal (plus some zeros), and $A_-$ is the diagonal matrix with all negative (if any) eigenvalues of $A$ on the main diagonal (plus some zeros). It is natural to ask what inequalities would be derived if one applies
(\ref{Eq:v10Wei}), (\ref{Eq:v10Wei2}),  and (\ref{Eq:WZ}) to $AB=A_+B+A_-B$. We present the inequalities obtained by  the splitting approach in the following proposition; we then show that  these inequalities   are
in general
weaker than the ones in Theorem~\ref{thm21}. We show the case for upper bound.

\begin{pro}\label{thm21New}
Let $A\in \mathbb{C}^{n\times n}$ be Hermitian  and let $B\in \Bbb C^{n\times n}$ be   positive semidefinite. For $1\le i_1<i_2<\cdots<i_k\le n$, we have
 \begin{equation}\label{Eq:thm21-1aZ}
\sum\limits_{t=1}^k\lambda_{i_t}(AB)\le\sum\limits_{t=1}^{\k_A }
 \lambda_{i_t}(A)\lambda_t(B)+
   \sum\limits_{t=n_A+1}^k\lambda_{t}(A)\lambda_{n-k+t}(B).
\end{equation}
\end{pro}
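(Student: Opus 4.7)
The plan is to apply Wielandt's sum inequality \eqref{Eq:v10Wei} to the additive splitting $A=A_++A_-$ and then bound each resulting piece via the product inequalities \eqref{Eq:WZ}. Since \eqref{Eq:v10Wei} requires Hermitian summands and the matrices $A_+B,A_-B$ need not themselves be Hermitian, I would first reduce (by unitary similarity) to $A$ diagonal and (by the continuity perturbation $B\mapsto B+\varepsilon I$) to $B$ positive definite. Then $AB$ is similar to $B^{1/2}AB^{1/2}=B^{1/2}A_+B^{1/2}+B^{1/2}A_-B^{1/2}$, a sum of two Hermitian matrices (the first PSD, the second negative semidefinite), while $A_{\pm}B$ shares its spectrum with $B^{1/2}A_{\pm}B^{1/2}$. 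Applying \eqref{Eq:v10Wei} with the selection $1\le i_1<\cdots<i_k\le n$ then produces
\[
\sum_{t=1}^k\lambda_{i_t}(AB)\le\sum_{t=1}^k\lambda_{i_t}(A_+B)+\sum_{t=1}^k\lambda_t(A_-B).
\]

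For the first term, $A_+$ and $B$ are both PSD, so the upper bound of \eqref{Eq:WZ} gives $\sum_{t=1}^k\lambda_{i_t}(A_+B)\le\sum_{t=1}^k\lambda_{i_t}(A_+)\lambda_t(B)$. By construction, $\lambda_{i_t}(A_+)=\lambda_{i_t}(A)$ whenever $\lambda_{i_t}(A)\ge 0$ (equivalently $t\le\k_A$) and $\lambda_{i_t}(A_+)=0$ otherwise, so this piece collapses to $\sum_{t=1}^{\k_A}\lambda_{i_t}(A)\lambda_t(B)$, matching the first sum in \eqref{Eq:thm21-1aZ}.

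For the second term, I would set $M:=-A_-$ (which is PSD), use $\lambda_t(A_-B)=-\lambda_{n-t+1}(MB)$, and apply the lower bound in \eqref{Eq:WZ} to the PSD pair $(M,B)$ at the index set $\{n-k+1,\dots,n\}$, namely $\sum_{t=1}^k\lambda_{n-k+t}(M)\lambda_{n-t+1}(B)\le\sum_{t=1}^k\lambda_{n-k+t}(MB)$. Substituting back and reindexing via $r=k-t+1$ (using $\lambda_{n-k+t}(M)=-\lambda_{k-t+1}(A_-)$) converts the estimate into
\[
\sum_{t=1}^k\lambda_t(A_-B)\le\sum_{r=1}^k\lambda_r(A_-)\lambda_{n-k+r}(B).
\]
Since $\lambda_r(A_-)=0$ for $r\le n_A$ and $\lambda_r(A_-)=\lambda_r(A)$ for $r>n_A$, only indices $r\in\{n_A+1,\dots,k\}$ contribute, yielding exactly the second sum in \eqref{Eq:thm21-1aZ}. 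Adding the two bounds concludes the proof. The main obstacle is the index bookkeeping in this second term: the negation and reversal incurred in passing between $A_-B$ and $MB$ must be tracked carefully, after which the cases $k\le n_A$ (second sum vacuous) and $k>n_A$ are handled uniformly by the $n_A$-cutoff built into $A_-$.
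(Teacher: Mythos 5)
Your proposal is correct and follows essentially the same route as the paper: split $A=A_++A_-$, apply Wielandt's inequality \eqref{Eq:v10Wei} to get $\sum_t\lambda_{i_t}(AB)\le\sum_t\lambda_{i_t}(A_+B)+\sum_t\lambda_t(A_-B)$, and then bound each piece by \eqref{Eq:WZ} (your treatment of the $A_-B$ term via $M=-A_-$ and index reversal is exactly how the paper's Case (2) handles negative semidefinite factors). The only difference is that you explicitly justify applying \eqref{Eq:v10Wei} through the Hermitian congruence $B^{1/2}A_\pm B^{1/2}$ and a continuity argument, a point the paper leaves implicit.
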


\proof
Note that $A_+$ is positive semidefinite and all $\lambda_{i_1}(A), \lambda_{i_2}(A), $ $\dots, \lambda_{i_k}(A)$ are contained on the main diagonal of $A_+$. So, $\l_{i_t}(A_+)=\l_{i_t}(A)$ for $t=1, 2, \dots, k_A$, and $\l_{i_t}(A_+)=0$ for $t>k_A$.
Observe $\l_t(A_-)=0$ for $t\leq n_A$.
It follows that
 \begin{eqnarray*}
{}\hspace{.65in}
\sum\limits_{t=1}^k\lambda_{i_t}(AB) & = & \sum\limits_{t=1}^k\lambda_{i_t}(A_{+}B+A_{-}B) \\
   &  \leq  &  \sum\limits_{t=1}^k\lambda_{i_t}(A_{+}B)+\sum\limits_{t=1}^k\lambda_{t}(A_{-}B)\\
      & \leq  & \sum\limits_{t=1}^k\lambda_{i_t}(A_{+})\l_t(B)+\sum\limits_{t=1}^k\lambda_{t}(A_{-})\l_{n-k+t}(B)\\
       &= & \sum\limits_{t=1}^{k_A}\lambda_{i_t}(A)\l_t(B)
       +\sum\limits_{t=n_A+1}^k\lambda_{t}(A)\l_{n-k+t}(B).  {}\hfill \qquad \qed
\end{eqnarray*}

In comparison, the first term on the right hand side of (\ref{Eq:thm21-1aZ}) is the same as that of (\ref{Eq:thm21-1a}), while the second terms are different. We denote the second term in (\ref{Eq:thm21-1a})
by $T_1$ and the second term in (\ref{Eq:thm21-1aZ})
by  $T_2$. We show that $T_1\leq T_2$ as follows.
\begin{eqnarray*}
T_1 & = &  \sum\limits_{t=k_A+1}^k\lambda_{i_t}(A)\lambda_{n-k+t}(B) \\
& = &  \sum\limits_{t=k_A+1}^{n_A}\lambda_{i_t}(A)\lambda_{n-k+t}(B)+ \sum\limits_{t=n_A+1}^k\lambda_{i_t}(A)\lambda_{n-k+t}(B)\\
& \leq  &    \sum\limits_{t=n_A+1}^k\lambda_{i_t}(A)\lambda_{n-k+t}(B)\\
& \leq  &  \sum\limits_{t=n_A+1}^k\lambda_{t}(A)\lambda_{n-k+t}(B)=T_2.
\end{eqnarray*}
\medskip

\noindent
{\bf Acknowledgement.} The second author appreciates discussions with Roger Horn, Chi-Kwong Li, and Ren-cang Li during the preparation of the manuscript.

\end{document}